\DeclareFontFamily{U}{mathb}{\hyphenchar\font45}
\DeclareFontShape{U}{mathb}{m}{n}{
      <5> <6> <7> <8> <9> <10>
      <10.95> <12> <14.4> <17.28> <20.74> <24.88>
      mathb10
      }{}
\DeclareSymbolFont{mathb}{U}{mathb}{m}{n}
\DeclareMathSymbol{\sqbullet}{1}{mathb}{"0D}
\pgfplotsset{compat=1.10}
\pgfplotsset{soldot/.style={color=black,only marks,mark=*}} \pgfplotsset{holdot/.style={color=black,fill=white,only marks,mark=*}}
\newtheorem{thm}{Theorem}[section]
\newtheorem{prop}[thm]{Proposition}
\newtheorem{cor}[thm]{Corollary}
\newtheorem{quest2}[thm]{Question}
\theoremstyle{definition}
\newtheorem{defn}[thm]{Definition}
\theoremstyle{remark}
\newtheorem{remark}[thm]{Remark}
\newtheorem{example}[thm]{Example}
\numberwithin{equation}{section}
\numberwithin{figure}{section}
 \newcommand{\R}{{\mathbb R}}
 \newcommand{\C}{{\mathbb C}}
\newcommand{\x}{{\tt x}} \newcommand{\y}{{\tt y}} 
\newcommand{\z}{{\tt z}} \renewcommand{\t}{{\tt t}}
\newcommand{\w}{{\tt w}}
\begin{document}
\title[Holomorphic functions with Nash real part]{Holomorphic functions with Nash real part}

\begin{abstract}
In this paper, we show that a holomorphic function, defined on an open subset $D$ of $\C^n$, is a complex Nash function if and only if its real part (or equivalently its imaginary part) is a real Nash function. 
\end{abstract}
\subjclass[2020]{Primary: 14P20, 32C07. Secondary: 32A10}
\keywords{Real Nash functions, Complex Nash functions, Holomorphic functions}

\date{24/11/2025}
\author{Antonio Carbone}
\address{Dipartimento di Scienze dell'Ambiente e della Prevenzione, Palazzo Turchi di Bagno, C.so Ercole I D'Este, 32, Università di Ferrara, 44121 Ferrara (ITALY)}
\email{antonio.carbone@unife.it}

\maketitle

\section{Introduction}

Nash functions were introduced by John Nash in his groundbreaking paper \cite{n}. A real (resp. complex) Nash function is a real analytic (resp. holomorphic) function which is algebraic over the ring of polynomials with real (resp. complex) coefficients (see Definition \ref{defNash} below). Let $D\subset \C^n$ be an open subset and $f:=f_1+if_2:D\to \C$ a holomorphic function. After identifying $\C^n$ with $\R^{2n}$ in the usual way, the functions $f_1$ and $f_2$ can be regarded as real analytic functions. A natural question is the following:

\begin{quest2}\label{q1}
Is there any relationship between the following two properties?
\begin{itemize}
\item[{\rm(i)}] $f$ is a complex Nash function,
\item[{\rm(ii)}] $f_1$ and $f_2$ are real Nash functions.
\end{itemize}
\end{quest2} 

In Proposition \ref{realcomplex}, we show that properties (i) and (ii) are equivalent. This result is probably well known by experts. We include its proof in this paper, as we could not find a precise reference for it in the existing literature. It is worthwhile to note that property (ii) implies property (i) if and only if the involved function $f:=f_1+if_2$ is holomorphic (see Remark \ref{hol1} below). 

A more interesting question is the following:

\begin{quest2}\label{q2}
Is it true that if $f_1$ is a real Nash function, then $f$ is a complex Nash function?
\end{quest2} 

Nash functions are not closed under integration. Thus, to answer the previous question, one cannot simply use the Cauchy-Riemann equations. 

The purpose of this paper is to answer Question \ref{q2} in the positive. Namely, to show the following:

\begin{thm}\label{main}
Let $D\subset \C^n$ be an open subset and $f:=f_1+if_2:D\to \C$ a holomorphic function. If $f_1$ is a real Nash function on $D$, then $f$ is a complex Nash function on $D$. 
\end{thm}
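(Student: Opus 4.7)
The plan is to derive an algebraic relation for $f$ from the one for $f_1$ by working on the ``doubled'' domain $D \times \bar D$, where $\bar D := \{\bar z : z \in D\}$, treating $z$ and $\bar z$ as independent holomorphic variables. By hypothesis there is a nonzero polynomial $P \in \R[X_1, \ldots, X_n, Y_1, \ldots, Y_n, T]$ with $P(x, y, f_1(x+iy)) = 0$ on $D$. Using $x_j = (z_j + \bar z_j)/2$, $y_j = (z_j - \bar z_j)/(2i)$ and $f_1(z) = (f(z) + \overline{f(z)})/2$, and introducing the holomorphic function $\bar f(w) := \overline{f(\bar w)}$ on $\bar D$, this identity rewrites as $\tilde P(z, \bar z, f(z), \bar f(\bar z)) = 0$ for all $z \in D$, where $\tilde P \in \C[U, V, W, S]$ is obtained from $P$ by an invertible $\C$-linear change of variables in $(X, Y)$ together with the substitution $T \mapsto (W + S)/2$, and is therefore a nonzero polynomial.

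Next I would apply an identity-principle argument to decouple $z$ from $\bar z$. Define $G : D \times \bar D \to \C$ by $G(u, v) := \tilde P(u, v, f(u), \bar f(v))$; this is holomorphic, and by the previous step $G(u, \bar u) = 0$ for every $u \in D$. The $\C$-linear change of variables $(u, v) \mapsto ((u+v)/2,\, (u-v)/(2i))$ is a biholomorphism of $\C^{2n}$ that sends the totally real submanifold $\{(u, \bar u) : u \in \C^n\}$ onto $\R^{2n}$. In the new coordinates $G$ becomes a holomorphic function vanishing on an open subset of $\R^{2n}$, so the classical identity principle gives $G \equiv 0$ on $D \times \bar D$.

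To conclude I would extract a polynomial relation for $f$ by specializing in $v$. Writing $\tilde P(U, V, W, S) = \sum_{\alpha, \beta} c_{\alpha, \beta}(V, S)\, U^\alpha W^\beta$ with $c_{\alpha, \beta} \in \C[V, S]$ not all zero, consider two cases. If there exists $v_0 \in \bar D$ such that $c_{\alpha, \beta}(v_0, \bar f(v_0)) \ne 0$ for some $(\alpha, \beta)$, then $Q(U, W) := \tilde P(U, v_0, W, \bar f(v_0)) \in \C[U, W]$ is a nonzero polynomial and $G(z, v_0) = Q(z, f(z)) = 0$ on $D$ shows that $f$ is complex Nash. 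Otherwise every $c_{\alpha, \beta}(v, \bar f(v))$ vanishes on $\bar D$; choosing any nonzero $c_{\alpha_0, \beta_0}$ yields a nontrivial polynomial equation for $\bar f$ on $\bar D$, so $\bar f$ is complex Nash, and conjugating this equation shows that $f$ is complex Nash on $D$.

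The main technical obstacle is the identity-principle step, which hinges on recognizing the anti-diagonal as a maximal totally real submanifold; the rest is bookkeeping with polynomials, with a clean case split ensuring that the nonzeroness of $\tilde P$ survives specialization and produces a nontrivial algebraic relation for either $f$ itself or the holomorphic function $\bar f$ on the conjugate domain.
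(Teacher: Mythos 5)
Your argument is correct, and while it shares the paper's central idea --- complexify the algebraic relation satisfied by $f_1$ by treating $z$ and $\bar z$ as independent holomorphic variables and invoke the identity principle on the maximal totally real anti-diagonal (this is precisely Cartan's reconstruction trick, phrased in $(z,\bar z)$-coordinates rather than via the conjugate power series $F$ and $H$) --- it diverges from the paper in two substantive ways. First, in the extraction step the paper normalizes $f(0)=0$ and specializes to $\bar z=0$ (i.e.\ $(z_1,z_2)=(z/2,z/2i)$), which forces a separate argument, via irreducibility of $P$ over $\R$ and the Nullstellensatz, to rule out that the specialized polynomial $P(\z/2,\z/2i,\t)$ vanishes identically; you instead specialize the second variable at a generic $v_0\in\bar D$ and absorb the degenerate case into a clean dichotomy, obtaining either a relation for $f$ directly or a relation for the conjugate function $\bar f$ that you then conjugate back. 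Second, and more importantly, your argument is dimension-free: the paper proves only the case $n=1$ this way and handles $n>1$ by slicing $D$ with real $2$-planes and appealing to the Kucharz--Kurdyka Bochnak--Siciak theorem for Nash functions together with Proposition \ref{realcomplex}, whereas you need none of that. Two small points of housekeeping: you should state explicitly that, Nashness being local, you may take $D$ to be a connected ball, both so that a single global annihilating polynomial $P$ exists and so that $D\times\bar D$ is connected for the identity-principle step; and in the degenerate case one should observe that a nonzero coefficient $c_{\alpha_0,\beta_0}$ depending only on $V$ cannot vanish on the open set $\bar D$, so the relation you obtain for $\bar f$ is automatically nontrivial. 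Neither point is a gap.
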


As $if=-f_2+if_1$ is a holomorphic function on $D$, then the previous result holds true also if we consider the imaginary part $f_2$ of $f$ instead of its real part $f_1$. By Proposition \ref{realcomplex} and Theorem \ref{main}, we deduce straightforwardly the following:

\begin{cor}
Let $D\subset \C^n$ be an open subset and $f:=f_1+if_2:D\to \C$ a holomorphic function. Then the following are equivalent:
\begin{itemize}
\item[{\rm(i)}] $f$ is a complex Nash function.
\item[{\rm(ii)}] $f_1$ is a real Nash function.
\item[{\rm(iii)}] $f_2$ is a real Nash function.
\end{itemize}
\end{cor}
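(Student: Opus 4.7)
The plan is to bypass the obstruction that Nash functions are not closed under integration (so one cannot simply integrate the Cauchy--Riemann equations) and instead reconstruct $f$ from the holomorphic extension of $f_1$ to $\C^{2n}$ via a substitution formula. Identifying $\C^n \cong \R^{2n}$ via $z_j = x_j + iy_j$, the hypothesis gives a non-trivial polynomial identity $P(x, y, f_1(x, y)) = 0$ with $P \in \R[x, y, T]$. Let $F_1 \colon \Omega \to \C$ be the unique holomorphic extension of $f_1$ to some open neighborhood $\Omega \subset \C^{2n}$ of $D$; by the identity principle $P(\zeta, \eta, F_1(\zeta, \eta)) = 0$ on $\Omega$, so $F_1$ is a complex Nash function on $\Omega$.

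Apply the biholomorphic change of variables $(\zeta, \eta) = ((z+w)/2, (z-w)/(2i))$ to obtain the complex Nash function
$$\tilde F_1(z, w) := F_1\Bigl(\tfrac{z+w}{2}, \tfrac{z-w}{2i}\Bigr)$$
on a neighborhood $\tilde \Omega \subset \C^{2n}$ of the maximally totally real submanifold $\Delta := \{(z, \bar z) : z \in D\}$. Introduce the holomorphic function $g(w) := \overline{f(\bar w)}$, defined on the set where $\bar w \in D$. Both $\tilde F_1(z, w)$ and $\tfrac12 (f(z) + g(w))$ are holomorphic near $\Delta$, and they agree on $\Delta$ itself, since both equal $f_1(x, y) = \tfrac12(f(z) + \overline{f(z)})$ there. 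As $\Delta$ is totally real of maximal dimension, the two holomorphic functions must coincide throughout a common connected neighborhood of $\Delta$, giving the key identity
$$f(z) = 2 \tilde F_1(z, w) - g(w).$$
Fix $z_0$ in a connected component of $D$ and specialize $w = \bar z_0$: for $z$ in a neighborhood of $z_0$,
$$f(z) = 2 \tilde F_1(z, \bar z_0) - \overline{f(z_0)}.$$
If $\tilde P(z, w, T) \in \C[z, w, T]$ is an irreducible polynomial of positive $T$-degree annihilating $\tilde F_1$, then $\tilde P$ is not divisible by $w - \bar z_0$, so $\tilde P(z, \bar z_0, T)$ is a non-zero element of $\C[z, T]$ annihilating $\tilde F_1(z, \bar z_0)$. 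Hence the right-hand side above is complex Nash in $z$, and consequently $f$ satisfies a non-trivial polynomial identity $Q(z, f(z)) = 0$ near $z_0$. By the identity principle this identity extends to the whole connected component of $D$ containing $z_0$; running the argument on each component concludes that $f$ is complex Nash on $D$.

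The main obstacle is the key identity $\tilde F_1(z, w) = \tfrac12(f(z) + g(w))$ on a neighborhood of $\Delta$: this is the ingredient that replaces the naive integration of the Cauchy--Riemann equations and rests on the standard uniqueness of holomorphic extension from a maximally totally real submanifold. Minor technical points are the non-triviality of the specialized polynomial (handled via an irreducible generator) and the reduction from a possibly disconnected $D$ to its connected components.
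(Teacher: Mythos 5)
Your proposal proves only the implication (ii) $\Rightarrow$ (i), which is the content of the paper's Theorem 1.3. The corollary is a three-way equivalence, and the remaining arrows are not addressed: (i) $\Rightarrow$ (ii) and (i) $\Rightarrow$ (iii) require producing a \emph{real} polynomial relation in $(x,y,t)$ for $f_1$ and $f_2$ out of a complex one for $f$, which the paper does in Proposition \ref{realcomplex} by taking the resultant of $P(\z,\w)$ and $P(\overline{\z},\t-\w)$ with respect to $\w$ and transporting it through the isomorphism $\z=\x+i\y$, $\overline{\z}=\x-i\y$; and (iii) $\Rightarrow$ (i) then follows by applying (ii) $\Rightarrow$ (i) to $if=-f_2+if_1$. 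Since you never mention $f_2$, your proof of the corollary as stated is incomplete, even though the implication you do treat is the only genuinely hard one.

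For that implication your route is genuinely different from the paper's and, for $n>1$, more direct. The paper carries out Cartan's polarization only for $n=1$ (building $F$ and $H$ from the Taylor coefficients and using $f(0)=0$ to get $f(z)=2g(z/2,z/(2i))$), and for $n>1$ it restricts to $2$-dimensional real planes and invokes the Bochnak--Siciak-type theorem of \cite{kk} together with \cite[Prop.~8.1.8]{bcr}. You instead polarize in all $2n$ variables at once: the identity $\tilde F_1(z,w)=\tfrac12\bigl(f(z)+\overline{f(\bar w)}\bigr)$ near the maximally totally real set $\Delta$, followed by freezing $w=\bar z_0$, recovers $f$ uniformly in $n$, with no slicing and no appeal to \cite{kk}. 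This is correct except for one step: for $n\ge 2$, irreducibility of $\tilde P$ does not force $\tilde P(z,\bar z_0,T)\not\equiv 0$, because $\{w=\bar z_0\}$ has codimension $n>1$ rather than being a hypersurface; for instance $(w_1-a_1)+(w_2-a_2)T$ is irreducible, has positive $T$-degree, and vanishes identically on $\{w=a\}$. The repair is routine: the set of $w_0\in\C^n$ at which the specialization of $\tilde P$ vanishes identically is a proper algebraic subset, so the points $z_0$ with $\bar z_0$ in that set form a nowhere dense subset of $D$; every connected component of $D$ therefore contains an admissible $z_0$, and since a holomorphic function that is Nash at one point of a connected open set is Nash on the whole set, this suffices.
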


\section{Preliminaries}

In this section we recall some basic concepts about Nash functions and we answer Question \ref{q1}. We refer the reader to \cite[\S8]{bcr} and \cite{t} for a more detailed introduction to Nash functions.

\subsection{Real and complex Nash functions} Let $k$ be either the field of real numbers $\R$ or the field of complex numbers $\C:=\{x+i y : x,y\in\R\}$, where $i^2=-1$. Let $D\subset k^n$ be a (non-empty) open subset and $f:D\to k$ a $k$-valued function. 

\begin{defn}\label{defNash}
We say that $f$ is a \textit{$k$-Nash function at $x_0\in D$} if there exist an open neighbourhood $U$ of $x_0$ in $D$ and a non-zero polynomial $P\in k[\x,\t]:=k[\x_1,\ldots,\x_n,\t]$ such that 
\begin{itemize}
\item $f$ is $k$-analytic on $U$,
\item $P(x,f(x))=0$ for each $x\in U$. 
\end{itemize}
The function $f$ is said to be a \textit{$k$-Nash function on $D$} if it is a Nash function at every point of $D$. We denote by $\mathcal{N}_k(D)$ the set of all $k$-Nash functions on $D$ . \hfill$\sqbullet$
\end{defn}

We will also call $\R$-Nash functions \textit{real Nash functions} and $\C$-Nash functions \textit{complex Nash functions}. As being $k$-analytic is a local property, we have that if a function $f$ is $k$-Nash on $D$, then $f$ is also $k$-analytic on $D$. By \cite[Cor.8.1.6]{bcr}, it follows that the set $\mathcal{N}_{\R}(D)$ endowed with the pointwise addition and multiplication of (real) functions is a subring of the ring of real analytic functions on $D$. While, by \cite[Cor.1.11]{t}, the set $\mathcal{N}_{\C}(D)$ endowed with the pointwise addition and multiplication of (complex) functions is a subring of the ring of holomorphic functions on $D$. 

Let $f:D\to k$ be a $k$-analytic function and assume that $D$ is an open and connected subset of $k^n$. As for each polynomial $P\in k[\x,\t]$ the function $D\to k, \, x\mapsto P(x,f(x))$ is $k$-analytic, we deduce (straightforwardly) that the following are equivalent:
\begin{itemize}
\item[{\rm (i)}] $f\in\mathcal{N}_k(D)$,
\item[{\rm (ii)}] there exists $x_0\in D$ such that $f$ is $k$-Nash at $x_0$,
\item[{\rm (iii)}] there exists an irreducible polynomial $P\in k[\x,\t]$ such that $P(x,f(x))=0$ for each $x\in D$. Moreover, if $k=\C$, the polynomial $P$ is unique up to a non-zero constant (as a consequence of Hilbert's Nullstellensatz).
\end{itemize}
In particular, if $D$ has finitely many connected components, then $f\in\mathcal{N}_k(D)$ if and only if there exists a non-zero polynomial $P\in k[\x,\t]$ such that $P(x,f(x))=0$ for each $x\in D$. If $D$ has infinitely many connected components, $f$ might not be algebraic over the ring $k[\x,\t]$, as shown in the following example.

\begin{example}
For each integer $m\geq 1$ let 
$$
D_m:=\{x:=(x_1,\ldots,x_n)\in k^n : \|x-(4m,0,\ldots,0)\|_n<1\},
$$
where $\|\cdot\|_n$ denotes the Euclidean norm of $k^n$. The set $D:=\bigcup_{m\geq 1}D_m$ is an open subset of $k^n$ with infinitely many connected components. Let $f:D\to k$ be the $k$-analytic function defined as $f(x)=x_1^{m}$ for $x\in D_m$. Clearly $f\in\mathcal{N}_k(D)$, because for each integer $m\geq 1$ the polynomial $P_m(\x,\t):=\t-\x_1^m\in k[\x,\t]$ satisfies $P_m(x,f(x))=0$ for each $x\in D_m$. Assume now that there exists a non-zero polynomial $P\in k[\x,\t]$ such that $P(x,f(x))=0$ for each $x\in D$. Let $r\geq 0$ be the degree of $P$ with respect to the variable $\t$ and $s\geq 0$ the degree of $P$ with respect to the variable $\x_1$. We may write
$$
P(\x,\t)=p_r(\x)\t^r+p_{r-1}(\x)\t^{r-1}+\ldots+p_1(\x)\t+p_0(\x),
$$
where $p_0,\ldots,p_r\in k[\x]$ are polynomials of degree $\leq s$ with respect to the variable $\x_1$. Observe that $p_r$ is not the zero polynomial, because the degree of $P$ with respect to the variable $\t$ is $r$. As $P(x,x_1^m)=P(x,f(x))=0$ for each $x\in D_m$, then for each $m\geq 1$ the polynomial 
$$
P(\x,\x_1^m)=p_r(\x)\x_1^{mr}+p_{r-1}(\x)\x_1^{m(r-1)}+\ldots+p_1(\x)\x_1+p_0(\x)\in k[\x]
$$
is the zero polynomial, because $D_m$ is an open subset of $k^n$. Given a polynomial $Q\in k[\x]$ we denote by $\deg_{\x_1}(Q(\x))$ the degree of $Q(\x)$ with respect to the variable $\x_1$. Let $m\geq 1$ be an integer such that $m>s$. As $p_r$ is not the zero polynomial, then
$$
\deg_{\x_1}(p_r(\x)\x_1^{mr})\geq mr>m(r-1)+s\geq \deg_{\x_1}(P(\x,\x_1^m)-p_r(\x)\x_1^{mr})=\deg_{\x_1}(p_r(\x)\x_1^{mr}).
$$
In particular, it follows that $p_r(\x)=0$, which is a contradiction. We deduce that there exist no non-zero polynomials $P\in k[\x,\t]$ such that $P(x,f(x))=0$ for each $x\in D$. \hfill$\sqbullet$
\end{example}

Given a complex polynomial 
$
P(\z):=\sum_{k_1,\ldots,k_n=0}^s\alpha_{k_1,\ldots,k_n}\z_1^{k_1}\ldots\z_n^{k_n}\in \C[\z]:=\C[\z_1,\ldots,\z_n],
$
its \textit{conjugated polynomial} is defined as
$
P^c(\z)=\sum_{k_1,\ldots,k_n=0}^s\overline{\alpha_{k_1,\ldots,k_n}}\z_1^{k_1}\ldots\z_n^{k_n}.
$
A straightforward computation shows that the polynomial $P(\z)P^c(\z)$ has real coefficients. In particular, we have the following:

\begin{remark}\label{real}
Let $D\subset \C^n$ be an open and connected subset and $f:D\to \C$ a holomorphic function. Then $f\in\mathcal{N}_{\C}(D)$ if and only if there exists a non-zero polynomial $P\in\C[\z,\t]$ with real coefficients such that $P(z,f(z))=0$ for each $z\in D$. \hfill$\sqbullet$
\end{remark}

\subsection{Relationship between real and complex Nash functions}

Let $D\subset\C^n$ be an open subset and $f:D\to \C$ a holomorphic function. We may write $f=f_1+i f_2$, where $f_1,f_2:D\to \R$. Let $x:=(x_1,\ldots,x_n)$, $y:=(y_1,\ldots,y_n)$ and $z:=(z_1,\ldots,z_n)$. We identify $\C^n$ with $\R^{2n}$ by setting $z:=x+i y$. In particular, we may regard the subset $D\subset \C^n$ as the subset 
$$
\{(x,y)\in\R^{2n} :  x+i y\in D\}\subset \R^{2n}.
$$
Up to a light abuse of notation, we will still denote the previous set as $D$. Moreover, we may regard the holomorphic function $f:D\to \C$ as the real analytic map
$$
D\to \R^2, \quad (x,y)\mapsto (f_1(x+i y),f_2(x+i y)).
$$
Again, up to a light abuse of notation, we will still denote the previous analytic function as $f$ and its real components as $f_1(x,y)$ and $f_2(x,y)$. These choices will create no confusion as the situation will always be clear by the context.

The following result, that relates real and complex Nash functions, is probably well known. We include its proof here, as we could not find a precise reference for it.

\begin{prop}\label{realcomplex}
Let $D\subset\C^n$ be an open subset and $f:=f_1+i f_2:D\to\C$. The following are equivalent:
\begin{itemize}
\item[{\rm (i)}] $f\in\mathcal{N}_{\C}(D)$.
\item[{\rm (ii)}] $f$ is holomorphic on $D$ and $f_1,f_2\in\mathcal{N}_{\R}(D)$.
\end{itemize}
\end{prop}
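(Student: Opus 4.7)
The plan is to prove the two implications separately. Throughout, one may reduce to a connected component of $D$, since being Nash is a local property and Remark \ref{real} applies on connected open sets.

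For (i) $\Rightarrow$ (ii), the fact that $f \in \mathcal{N}_{\C}(D)$ is holomorphic is part of the definition, so the content lies in showing $f_1, f_2 \in \mathcal{N}_{\R}(D)$. Invoking Remark \ref{real}, I pick a nonzero $P \in \C[\z, \t]$ with real coefficients such that $P(z, f(z)) = 0$. Conjugating this identity (and using that $P$ has real coefficients) yields $P(\bar z, \overline{f(z)}) = 0$. Identifying $\z_j = \x_j + i \y_j$, both identities become polynomial relations with coefficients in $\C[\x, \y]$, so $f$ and $\bar f$ are algebraic over $\C[\x, \y]$. Since algebraic elements over a ring form a subring of the ambient analytic functions, $f_1 = (f + \bar f)/2$ and $f_2 = (f - \bar f)/(2i)$ are also algebraic over $\C[\x, \y]$. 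To upgrade to algebraicity over $\R[\x, \y]$, I use the standard trick: if $Q \in \C[\x, \y][\t]$ annihilates a real-valued function, then so does $Q \bar Q$ (the product with the polynomial obtained by conjugating its coefficients), and $Q \bar Q$ has real coefficients. Combined with the fact that $f_1, f_2$ are real analytic, this gives $f_1, f_2 \in \mathcal{N}_{\R}(D)$.

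For (ii) $\Rightarrow$ (i), let $P_1, P_2 \in \R[\x, \y, \t] \setminus \{0\}$ witness $f_1, f_2 \in \mathcal{N}_{\R}(D)$ on a connected component. The invertible change of variables $\x_j = (\z_j + \bar{\z}_j)/2$, $\y_j = (\z_j - \bar{\z}_j)/(2i)$ transforms $P_1, P_2$ into nonzero polynomials in $\C[\z, \bar \z, \t]$ annihilating $f_1, f_2$, respectively. Hence $f_1, f_2$ are algebraic over $\C[\z, \bar \z]$, and so is $f = f_1 + i f_2$: there exists a nonzero $Q \in \C[\z, \bar \z, \t]$ with $Q(z, \bar z, f(z)) = 0$ on the component. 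The key remaining step, which is also the main obstacle, is to eliminate $\bar \z$ from $Q$ using the holomorphicity of $f$.

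To perform this elimination, set $G(z) := Q(z, \bar z, f(z)) \equiv 0$. Since $f$ is holomorphic one has $\partial_{\bar z_j} f = 0$, and the Wirtinger chain rule gives $(\partial_{\bar{\z}_j} Q)(z, \bar z, f(z)) = 0$ for each $j$. Iterating, $(\partial_{\bar{\z}}^{\beta} Q)(z, \bar z, f(z)) = 0$ for every multi-index $\beta$. Fixing $z_0 \in D$, this means that the polynomial $\bar \z \mapsto Q(z_0, \bar \z, f(z_0))$ has all partial derivatives vanishing at $\bar \z = \bar z_0$, and hence, being a polynomial, it vanishes identically. Writing $Q = \sum_\alpha Q_\alpha(\z, \t)\, \bar{\z}^\alpha$, this forces $Q_\alpha(z_0, f(z_0)) = 0$ for every $\alpha$ and every $z_0 \in D$. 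Since $Q \ne 0$, some coefficient $Q_{\alpha_0} \in \C[\z, \t]$ is nonzero, and it annihilates $(z, f(z))$ throughout the component, proving $f \in \mathcal{N}_{\C}(D)$. This Wirtinger-plus-Taylor trick is what essentially uses the holomorphicity of $f$ to convert a relation over $\C[\z, \bar \z, \t]$ into one over $\C[\z, \t]$.
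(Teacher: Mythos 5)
Your proof is correct, and while the implication (i)$\Rightarrow$(ii) is essentially the paper's argument in disguise, your proof of (ii)$\Rightarrow$(i) takes a genuinely different route. For (i)$\Rightarrow$(ii), the paper explicitly forms the resultant of $P(\z,\w)$ and $P(\overline{\z},\t-\w)$ with respect to $\w$ to annihilate $f+\overline{f}=2f_1$; your appeal to the fact that the elements algebraic over $\C[\x,\y]$ form a subring of the (complex-valued) real analytic functions on a connected component is the same computation packaged abstractly, and your $Q\overline{Q}$ trick to descend to real coefficients is literally the paper's $QQ^c$ step. For (ii)$\Rightarrow$(i), the paper performs a generic affine change of coordinates so that the totally real slice $\R^n\times\{0\}$ meets $D$ and the polynomials $P_1,P_2$ survive restriction to it, takes a resultant to annihilate $f=f_1+if_2$ on that slice, and then invokes the identity principle for the holomorphic function $z\mapsto R(z,f(z))$ vanishing on an open piece of a maximally totally real subspace. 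You instead keep the full relation $Q(z,\overline{z},f(z))\equiv 0$ with $Q\in\C[\z,\overline{\z},\t]$ and eliminate the $\overline{\z}$-dependence by iterated Wirtinger differentiation: since $Q$ is holomorphic in its arguments and $\partial_{\overline{z}_j}f=0$, every formal derivative $\partial^{\beta}_{\overline{\z}}Q$ also vanishes along $(z,\overline{z},f(z))$, so the polynomial $\overline{\z}\mapsto Q(z_0,\overline{\z},f(z_0))$ vanishes identically and each coefficient $Q_\alpha(\z,\t)$ annihilates the graph of $f$. This polarization argument is correct, avoids both the genericity/coordinate-change step and the identity principle on a real slice, and makes the role of holomorphy more transparent; the paper's slicing argument, on the other hand, only ever manipulates polynomials in $n+1$ variables rather than $2n+1$.
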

\begin{proof}
As being Nash is a local property, we may assume that $D$ is connected. 

\noindent{\sc (i) implies (ii).} The functions $f_1,f_2:D\to \R$ are real analytic, because $f$ is holomorphic. In particular, it is enough to show that there exist two non-zero polynomials 
$$
P_1,P_2\in\R[\x,\y,\t]:=\R[\x_1,\ldots,\x_n,\y_1,\ldots,\y_n,\t]
$$ 
such that $P_1(x,y,f_1(x,y))=P_2(x,y,f_2(x,y))=0$ for each $(x,y)\in D$. As $f\in\mathcal{N}_\C(D)$ and $D$ is connected, then there exists a non-zero polynomial $P(\z,\t)\in\C[\z,\t]:=\C[\z_1,\ldots,\z_n,\t]$ such that $P(z,f(z))=0$ for each $z\in D$. By remark \ref{real}, we may assume that $P(\z,\t)$ has real coefficients. As $P(\z,\t)$ has real coefficients and $P(z,f(z))=0$ for each $z\in D$, then a straightforward computation shows that the polynomial $P(\overline{\z},\t)\in \C[\overline{\z},\t]:=\C[\overline{\z}_1,\ldots,\overline{\z}_n,\t]$ (anti-holomorphic in the variables $\overline{\z}_1,\ldots,\overline{\z}_n$) satisfies $P(\overline{z},\overline{f(z)})=0$ for each $z\in D$. Observe that the variables $\z_1,\ldots,\z_n$, $\overline{\z}_1,\ldots,\overline{\z}_n$ are algebraically independent over $\C$.

As $P(\z,\w)$ and $P(\overline{\z},\t-\w)$ have no common factors in $\C[\z,\overline{\z}, \t,\w]$ of positive degree with respect to the variable $\w$, then by \cite[3.6.1(ii)]{Cox} and its proof, there exist two polynomials $A,B\in\C[\z,\overline{\z}, \t,\w]$ and a non-zero polynomial $R\in\C[\z,\overline{\z},\t]$ such that
$$
R(\z,\overline{\z},\t)=A(\z,\overline{\z}, \t,\w)P(\z,\w)+B(\z,\overline{\z}, \t,\w)P(\overline{\z},\t-\w).
$$
Here the polynomial $R(\z,\overline{\z},\t)$ is the resultant in $\C[\z,\overline{\z}, \t,\w]$ with respect to the variable $\w$ of the polynomials $P(\z,\w)$ and $P(\overline{\z},\t-\w)$. Observe that
\begin{equation}\label{contoris}
R(z,\overline{z},f(z)+\overline{f(z)})=0
\end{equation}
for each $z\in D$, because $P(z,f(z))=P(\overline{z},\overline{f(z)})=0$ for each $z\in D$. 

For each $k=1,\ldots,n$ we write $\z_k:=\x_k+i\y_k$, so $\overline{\z}_k=\x_k-i\y_k$. As the ring homomorphism 
$$
\varphi:\C[\z,\overline{\z},\t]\mapsto \C[\x,\y,\t], \quad P(\z,\overline{\z},\t)\mapsto P(\x+i\y,\x-i\y,\t).
$$
is an isomorphism,  the polynomial 
$
Q(\x,\y,\t):=\varphi(R(\z,\overline{\z},\t))
$
is not the zero polynomial of $\C[\x,\y,\t]$. Up to substituting $Q$ with $QQ^c$, we may assume that $Q\in\R[\x,\y,\t]$. Moreover, as $2f_1(x,y)=f(z)+\overline{f(z)}$ for each $z=x+i y\in D$, we deduce that $Q(x,y,2f_1(x,y))=0$ for each $(x,y)\in D$. Up to substituting $Q(\x,\y,\t)$ with $Q(\x,\y,2\t)$, we may conclude that $f_1\in\mathcal{N}_\R(D)$. 

As $2i f_2(x,y)=f(z)-\overline{f(z)}=f(z)+(-\overline{f(z)})$ for each $z=x+i y\in D$, a similar argument shows that $f_2\in\mathcal{N}_\R(D)$, as required.

\noindent{\sc (ii) implies (i).} Let $P_1,P_2\in\R[\x,\y,\t]$ be non-zero polynomials such that
$$
P_1(x,y,f_1(x,y))=P_2(x,y,f_2(x,y))=0
$$
for each $(x,y)\in D$. The set $\{y_0\in \R^n: \, P_1(\x,y_0,\t)=  0 \, \,  \text{or} \, \, P_2(\x,y_0,\t)= 0\}$ is nowhere dense in $\R^n$. Thus, up to an affine change of coordinates, we may assume that $\Omega:=D\cap (\R^n\times\{0\})$ is a non-empty open subset of $\R^n\times \{0\}$ and that the polynomials 
$$
Q_1(\z,\t):=P_1(\z,0,\t) \quad   \text{and} \quad  Q_2(\z,\t):=P_2(\z,0,-i\t)
$$
are not the zero polynomials in $\C[\z,\t]$. By the fact that 
$$
P_1(x,0,f_1(x,0))=P_2(x,0,f_2(x,0))=0
$$
for each $(x,0)\in \Omega$, we deduce that 
$$
Q_1(x,f_1(x,0))=P_1(x,0,f_1(x,0))=0  \quad \text{and} \quad Q_2(x,i f_2(x,0))=P_2(x,0,f_2(x,0))=0
$$
for each $(x,0)\in \Omega$. 

Let $R(\z,\t)\in\C[\z,\t]$ be the resultant in $\C[\z,\t,\w]$ with respect to the variable $\w$ of $Q_1(\z,\w)$ and $Q_2(\z,\t-\w)$. As $Q_1(\z,\w)$ and $Q_2(\z,\t-\w)$ have no common irreducible factors in $\C[\z,\t,\w]$ of positive degree with respect to the variable $\w$, then by \cite[3.6.1(ii)]{Cox} and its proof, $R(\z,\t)$ is not the zero polynomial and there exist two polynomials $A,B\in\C[\z,\t,\w]$ such that
$$
R(\z,\t)=A(\z,\t,\w)Q_1(\z,\w)+B(\z,\t,\w)Q_2(\z,\t-\w).
$$
A straightforward computation shows that
\begin{equation}\label{Qbullet}
R(x,f_1(x,0)+i f_2(x,0))=0
\end{equation}
for each $(x,0)\in \Omega$. Recall that $f(z)=f_1(x,y)+i f_2(x,y)$ for each $z=x+i y\in D$. The map $D\to \C, \ z\mapsto R(z,f(z))$ is holomorphic on $D$, because $f$ is holomorphic on $D$. By \eqref{Qbullet}, we have
$$
R(z,f(z))=R(x,f_1(x,0)+i f_2(x,0))=0
$$
for each $z=x$ such that $(x,0)\in \Omega$. We deduce that $R(z,f(z))=0$ for each $z\in D$, because $\Omega$ is an open subset of the real vector space $\R^n\times \{0\}\subset \C^n$ and $R(z,f(z))=0$ on $\Omega$ (here we are using in an essential way that $f$ is holomorphic, see also Remark \ref{hol1} below). As $R(\z,\t)\in \C[\z,\t]$ is not the zero polynomial, we conclude that $f\in\mathcal{N}_\C(D)$, as required. 
\end{proof}

The following remark shows that the assumption that $f$ is holomorphic is essential in order to guarantee that (ii) implies (i). 

\begin{remark}\label{hol1}
Let $D\subset\C^n$ be an open subset, $D_0$ a connected component of $D$ and $f:D\to \C$ a continuous function. By \cite[Lem.3.B.13]{gr}, if there exists a non-zero polynomial $P\in\C[\z,\t]$ such that $P(z,f(z))=0$ for each $z\in D_0$, then $f$ is holomorphic on $D_0$. In particular, in the previous proposition, the assumption that $f$ is holomorphic cannot be dropped. In fact, if $f_1,f_2:D\to \R$ are real Nash functions on $D$ such that the map $f:=f_1+i f_2:D\to \C$ is not holomorphic on $D$, then there exists a connected component $D_0$ of $D$ such that there exist no non-zero polynomials $P\in\C[\z,\t]$ such that $P(z, f(z))=0$ for each $z\in D_0$. A concrete example is given by the anti-holomorphic function $f(z):=\overline{z}=x-iy$. \hfill$\sqbullet$
\end{remark}

\section{Proof of Theorem \ref{main}}

In this section, we prove Theorem \ref{main}. The argument is based on an elementary, but brilliant, calculation of Cartan \cite[\S IV.3.5]{Ca} that allows to reconstruct a holomorphic function by its real part in a definable way (that is, without integration). Recall that a set $U\subset\R^m$ is \em semialgebraic \em if it is a Boolean combination of sets defined by polynomial equalities and inequalities, while a function $f:U\to\R$ is \em semialgebraic \em if its graph is a semialgebraic subset of $\R^{m+1}$.

\begin{proof}[Proof of Theorem \ref{main}]
We consider the case $n=1$ and $n>1$ separately.

\noindent{\sc Case $n=1$.} We may assume that $0\in D$ and that $f(0)=0$. As being Nash is a local property, we may assume, in addition, that $D$ is an open ball centred in 0 such that the Taylor series of $f$ in 0 converges in $D$. Thus, we may write
$$
f(z)=\sum_{\nu=0}^\infty a_{\nu}z^\nu
$$
for each $z\in D$. Let $\rho>0$ be such that $D:=\{z\in \C : |z|<\rho\}$ and let $\Delta\subset \C^2$ be the polydisk defined as $\Delta:=\{(z_1,z_2)\in \C^2 : |z_1|<\tfrac{\rho}{2}, |z_2|<\tfrac{\rho}{2}\}$. Define the formal power series
$$
F(\z_1,\z_2):=\sum_{\nu=0}^\infty a_{\nu}(\z_1+i\z_2)^{\nu} \quad \text{and} \quad H(\z_1,\z_2):=\sum_{\nu=0}^\infty \overline{a}_{\nu}(\z_1-i\z_2)^{\nu}.
$$
A straightforward computation shows that these formal power series converge on $\Delta$. Define the holomorphic function $g:\Delta\to \C$ as
\begin{equation}\label{def}
g(z_1,z_2):=\frac{1}{2}(F(z_1,z_2)+H(z_1,z_2)).
\end{equation}
Observe that, as $f(0)=0$, then $H(0,0)=0$. By \eqref{def} and by the fact that $H(0,0)=0$, we have
$$
f(z)=2g\Big(\frac{z}{2},\frac{z}{2i}\Big)
$$
for each $z\in D$. As $f(x+iy)=F(x,y)$ and $\overline{f(x+iy)}=H(x,y)$ for each $(x,y)\in \Delta \cap \R^2$, by \eqref{def}, we have that $g(x,y)=f_1(x+iy)$ for each $(x,y)\in \Delta \cap \R^2$.

As $f_1\in \mathcal{N}_\R(D)$ and $D$ is connected, then there exists an irreducible non-zero polynomial $P\in \R[\x,\y,\t]$ such that 
$$
P(x,y,2f_1(x+iy))=0
$$ 
for each $(x,y)\in D$. In particular, $P(x,y,2g(x,y))=0$ for each $(x,y)\in \Delta\cap \R^2$, because $g(x,y)=f_1(x+iy)$ for each $(x,y)\in \Delta\cap \R^2$. As the function $(z_1,z_2)\mapsto P(z_1,z_2,2g(z_1,z_2))$ is holomorphic on $\Delta$ and vanishes identically on $\Delta\cap \R^2$, then $P(z_1,z_2,2g(z_1,z_2))=0$ for each $(z_1,z_2)\in \Delta$. 

We regard the polynomial $P$ as a polynomial in $\C[\z_1,\z_2,\t]$. If $P(\z,-i\z,\t)$ is the zero polynomial, then 
$
\{\z_2+i\z_1=0\}\subset \{P(\z_1,\z_2,\t)=0\}.
$
It follows, by Hilbert's Nullstellensatz, that $\z_2+i\z_1$ divides $(P(\z_1,\z_2,\t))^k$ in $\C[\z_1,\z_2,\t]$ for some integer $k\geq 1$. As $P$ is irreducible as a polynomial with real coefficients, then, up to a non-zero constant, $P(\z_1,\z_2,\t)=\z_1^2+\z_2^2$, which contradicts the fact that $P(x,y,2f_1(x+iy))=0$ for each $(x,y)\in D$. We deduce that $P(\z,-i\z,\t)$ is not the zero polynomial. In particular, the polynomial
\begin{equation}\label{eq1}
Q(\z,\t):=P\Big(\frac{\z}{2},\frac{\z}{2i},\t\Big)
\end{equation}
is not the zero polynomial in $\C[\z,\t]$.

Observe that if $(z_1,z_2)\in D\times D$, then $|z_1|<\rho$ and $|z_2|<\rho$, so $\big(\tfrac{z_1}{2},\tfrac{z_2}{2i})\in \Delta$, because $\Delta=\{(z_1,z_2)\in \C^2 : |z_1|<\tfrac{\rho}{2}, |z_2|<\tfrac{\rho}{2}\}$. As $P(z_1,z_2,2g(z_1,z_2))=0$ for each $(z_1,z_2)\in \Delta$, then
$$
P\Big(\frac{z_1}{2},\frac{z_2}{2i},2g\Big(\frac{z_1}{2},\frac{z_2}{2i}\Big)\Big)=0
$$
for each $(z_1,z_2)\in D\times D$. We deduce that
$$
Q(z,f(z))=Q\Big(z,2g\Big(\frac{z}{2},\frac{z}{2i}\Big)\Big)=P\Big(\frac{z}{2},\frac{z}{2i},2g\Big(\frac{z}{2},\frac{z}{2i}\Big)\Big)=0
$$
for each $z\in D$. As $Q$ is not the zero polynomial, we conclude that $f\in \mathcal{N}_\C(D)$, as required.

\noindent{\sc Case $n>1$.} As being Nash is a local property, we may assume that $D$ is an open ball of $\C^n$. We identify $\C^n$ with $\R^{2n}$ in the usual way. By Proposition \ref{realcomplex}, we only need to show that $f_2\in\mathcal{N}_\R(D)$. As $f_2$ is the imaginary part of the holomorphic function $f$, then $f_2$ is a real analytic function on $D$. In particular, as $D$ is a semialgebraic set, then by \cite[Prop.8.1.8]{bcr}, we may reduce to show: \textit{$f_2$ is a semialgebraic function on $D$.}

Let $v,w\in\C^n$ be such that $v\neq0$ and the real affine line $L_{v,w}:=\{w+xv : x\in\R\}$ satisfies $L_{v,w}\cap D\neq\varnothing$. The set $\pi_{v,w}:=\{w+xv+y(iv) : (x,y)\in\R^2\}$ is a complex affine plane that contains $L_{v,w}$. In particular, $D\cap\pi_{v,w}$ is a non-empty open subset of $\pi_{v,w}$. The restriction $f|_{D\cap \pi_{v,w}}$ is a holomorphic function on $D\cap\pi_{v,w}$, because $f$ is a holomorphic function on $D$. As $f_1\in \mathcal{N}_\R(D)$, then by \cite[Prop.8.1.8]{bcr}, $f_1$ is a semialgebraic function on $D$. Thus, the restriction $f_1|_{D\cap \pi_{v,w}}$ is a real analytic and semialgebraic function on the semialgebraic set $D\cap \pi_{v,w}$ (recall that $D$ is an open ball of $\C^n$). Using again \cite[Prop.8.1.8]{bcr}, we deduce that $f_1|_{D\cap \pi_{v,w}}\in \mathcal{N}_\R(D\cap \pi_{v,w})$. By the case $n=1$, we have that $f|_{D\cap \pi_{v,w}}\in\mathcal{N}_\C(D\cap \pi_{v,w})$. Thus, by Proposition \ref{realcomplex}, that $f_2|_{D\cap \pi_{v,w}}\in \mathcal{N}_\R(D\cap \pi_{v,w})$. In particular, arguing as before, we deduce that 
$$
f_2|_{D\cap L_{v,w}}=(f_2|_{D\cap \pi_{v,w}})|_{D\cap L_{v,w}}\in \mathcal{N}_\R(D\cap L_{v,w}).
$$ 
By \cite[Thm.1.1]{kks}, we conclude that $f_2$ is a semialgebraic function on $D$, as required.
\end{proof}

\subsection*{Acknowledgments} The author would like to thank Edward Bierstone for showing us the (brilliant) calculation of Cartan and Jos\'e F. Fernando for suggesting an argument that simplified our original proof of Proposition \ref{realcomplex}. The author is also strongly indebted to the anonymous referee for their careful reading and comments that have clarified an imprecise argument.

\bibliographystyle{amsalpha}

\end{document}